\newtheorem{lemma}{Lemma}
\newtheorem{theorem}{Theorem}
\newcommand{\cE}{\mathcal{E}}
\newcommand{\cH}{\mathcal{H}}
\newcommand{\thmref}[1]{Theorem~\ref{thm:#1}}
\title{Spectral radius and Hamiltonicity of uniform hypergraphs}
\author{George Brooks
\thanks{University of South Carolina, Columbia, SC. ({\tt ghbrooks@email.sc.edu}). Partially supported by NSF DMS 2038080 grant.}
\and
William Linz \thanks{University of South Carolina, Columbia, SC. ({\tt wlinz@mailbox.sc.edu}). Partially supported by NSF DMS 2038080 grant.}
\and
Ruth Luo \thanks{University of South Carolina, Columbia, SC. ({\tt ruthluo@sc.edu}).}
}
\begin{document}

\maketitle

\abstract{Let $n$ and $r$ be integers with $n-2\ge r\ge 3$. We prove that any $r$-uniform hypergraph $\mathcal{H}$ on $n$ vertices with spectral radius $\lambda(\mathcal{H}) > \binom{n-2}{r-1}$ must contain a Hamiltonian Berge cycle unless $\mathcal{H}$ is the  complete graph $K_{n-1}^r$ with one additional edge. This generalizes a result proved by Fiedler and Nikiforov for graphs. As part of our proof, we show that if $|\mathcal{H}| > \binom{n-1}{r}$, then $\mathcal{H}$ contains a Hamiltonian Berge cycle unless $\mathcal{H}$ is the complete graph $K_{n-1}^r$ with one additional edge, generalizing a classical theorem for graphs. 
}

\section{Introduction}

There is a long history of finding sufficient conditions which guarantee Hamiltonicity in graphs. A classic result in this area is Dirac's theorem~\cite{Dir}, which states that any graph on $n$ vertices with minimum degree at least $n/2$ must contain a Hamiltonian cycle.

Fiedler and Nikiforov~\cite{FN} gave a sufficient condition for Hamiltonicity based on a graph's spectral radius. Recall that the \emph{spectral radius} of a graph $G$ is defined as the largest eigenvalue of its adjacency matrix. 

\begin{theorem}[Fiedler-Nikiforov]\label{thm:fn}
Let $G$ be a graph of order $n$ and spectral radius $\lambda(G)$. If $\lambda(G) \ge n-2$, then $G$ contain a Hamiltonian path unless $G\cong K_{n-1} + v$. If $\lambda(G) > n-2$, then $G$ contains a Hamiltonian cycle unless $G\cong K_{n-1} + e$.
\end{theorem}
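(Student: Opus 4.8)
The plan is to convert the spectral hypothesis into an edge count and then feed it into the classical extremal theorems on Hamiltonicity, reserving the spectral information for a final boundary analysis. The two external ingredients I would use are Hong's inequality, $\lambda(G)^2 \le 2m - n + 1$ for a connected graph with $m$ edges, and the Ore--Erd\H{o}s extremal bounds: a non-traceable graph on $n$ vertices has at most $\binom{n-1}{2}$ edges, and a non-Hamiltonian graph has at most $\binom{n-1}{2}+1$ edges. Throughout, write $m=e(G)$.

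First I would split on connectivity. If $G$ is disconnected, its spectral radius equals $\lambda(H)$ for a component $H$ on $k \le n-1$ vertices, so $\lambda(G) \le k-1 \le n-2$. Hence no disconnected graph meets $\lambda(G) > n-2$, and a disconnected graph meeting $\lambda(G) \ge n-2$ must have equality throughout, forcing $H \cong K_{n-1}$ and $G \cong K_{n-1}+v$. This already disposes of the disconnected case in both statements. For connected $G$, Hong's inequality combined with the hypothesis gives $(n-2)^2 \le 2m - n + 1$, i.e.\ $2m \ge n^2 - 3n + 3$. Since $n^2-3n$ is always even, the right-hand side is odd while $2m$ is even, so in fact $2m \ge n^2 - 3n + 4$, that is, $m \ge \binom{n-1}{2}+1$.

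For the Hamiltonian path statement this finishes the argument: a connected $G$ has $m \ge \binom{n-1}{2}+1 > \binom{n-1}{2}$, which exceeds the maximum size of a non-traceable graph, so $G$ is traceable. The Hamiltonian cycle statement is the delicate one, because the bound $m \ge \binom{n-1}{2}+1$ lands exactly on the maximum size of a non-Hamiltonian graph. Thus if a connected $G$ with $\lambda(G) > n-2$ were non-Hamiltonian, it would be an \emph{extremal} non-Hamiltonian graph with precisely $\binom{n-1}{2}+1$ edges. I would then invoke the explicit classification of these extremal graphs and compute the spectral radius of each: apart from $K_{n-1}+e$ they satisfy $\lambda \le n-2$ and are therefore excluded by the strict hypothesis. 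The instructive case is $n=5$, where $K_2 \vee \overline{K_3}$ is non-Hamiltonian with $7 = \binom{4}{2}+1$ edges yet has $\lambda = 3 = n-2$ exactly; since $\lambda(K_{n-1}+e) > \lambda(K_{n-1}) = n-2$ (attaching a pendant strictly increases the spectral radius), only $K_{n-1}+e$ survives, and it is a genuine exception.

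The main obstacle is exactly this last step. Because $K_{n-1}+e$ itself has $\lambda > n-2$ and only $\binom{n-1}{2}+1$ edges, no purely edge-counting improvement can push $m$ past the extremal value, so the inequality alone cannot separate the Hamiltonian graphs from $K_{n-1}+e$. The strict inequality $\lambda > n-2$ is what does the separating, and making this rigorous requires the exact extremal structure of maximum-size non-Hamiltonian graphs together with a spectral computation confirming that every such graph other than $K_{n-1}+e$ sits at $\lambda \le n-2$. Controlling this boundary---rather than the reduction to an edge count, which is routine---is where the real work lies.
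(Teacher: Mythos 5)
The paper does not actually prove this statement---it is quoted from Fiedler and Nikiforov \cite{FN} as background---so there is no internal proof to compare against; your argument is correct and is essentially the original Fiedler--Nikiforov proof. Its architecture (spectral bound $\rightarrow$ edge count $\rightarrow$ extremal Hamiltonicity theorem $\rightarrow$ analysis at the boundary) is also precisely what the authors do for the hypergraph generalization, \thmref{specrad}, with two differences worth noting. First, in place of Hong's inequality the paper uses the Stanley-type bound $\lambda(G) \le \frac{1}{2}\left(\sqrt{1+8m}-1\right)$ (via its hypergraph generalization by Bai and Lu, \thmref{bailu}); under $\lambda \ge n-2$ this yields only $m \ge \binom{n-1}{2}$, but it comes with an equality characterization ($G$ is a clique plus isolated vertices) and needs no separate treatment of disconnected graphs, whereas your Hong-plus-parity route buys the stronger bound $m \ge \binom{n-1}{2}+1$ for connected $G$---which disposes of the path case instantly---at the cost of the component analysis you correctly supply. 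Second, for the cycle statement both routes land exactly on $m = \binom{n-1}{2}+1$ and must then consult the classification of maximum-size non-Hamiltonian graphs; you should cite this precisely (Ore 1961, Bondy 1972: the only such graphs are $K_{n-1}+e$ and, for $n=5$, $K_2 \vee \overline{K_3}$), since it is the one genuinely nontrivial external ingredient, and your spectral check that $\lambda(K_2 \vee \overline{K_3}) = 3 = n-2$ while $\lambda(K_{n-1}+e) > \lambda(K_{n-1}) = n-2$ is exactly what is needed to finish. The paper sidesteps this classification in the hypergraph setting because its edge-count result, \thmref{edges}, already builds the uniqueness of the extremal configuration into its statement; your proposal instead exposes that classification as an explicit step, which is an honest accounting of where the work lies.
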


A number of other authors~\cite{LLT, Nik, NG} have extended \thmref{fn} or given further spectral conditions which imply Hamiltonicity. 

In this note, we extend \thmref{fn} to the hypergraph setting. An \emph{$r$-graph} or $r$-uniform hypergraph $\cH$ on $n$ vertices is a pair $(V, \cE)$, where $|V| = n$ and $\cE\subseteq \binom{V}{r}$, where $\binom{V}{r}$ is the set of $r$-element subsets of $V$.  The cardinality of the edge-set $\cE$ is denoted by $|\cH|$. For a vertex $x\in V$, the \emph{degree} $d(x) := |\{E: x\in E\in \cE\}|$ is the number of edges in $\cE$ which contain $x$. The \emph{minimum degree} is defined to be $\delta(\cH):= \min_{x\in V}d(x)$. The \emph{complete $r$-graph} on $n$ vertices $K_n^r$ is the $r$-graph whose edge-set is the set of all $r$-element subsets of a vertex set of size $n$. 

Following Cooper and Dutle~\cite{CD}, the \emph{spectral radius} $\lambda(\cH)$ of an $r$-graph $\cH$ is defined as follows. The polynomial form $P_{\cH}({\bf x}): \mathbb{R}^n\rightarrow \mathbb{R}$ associated with the $r$-graph $\cH$ is defined for any vector ${\bf x} = (x_1, \ldots, x_n) \in \mathbb{R}^n$ by 
\[P_{\cH}({\bf x}) = r \sum_{\{i_1, i_2, \ldots, i_r\}\in E(\cH)}x_{i_1}\ldots x_{i_r}.\]

The \emph{spectral radius} $\lambda(\cH)$ is defined to be the maximum of $P_{\cH}({\bf x})$ over all unit vectors in the $\ell_r$-norm, i.e.
\[\lambda(\cH) = \max_{\lVert{\bf x}\rVert_r = 1}P_{\cH}({\bf x}).\]

(In other works \cite{NikAnalytic} the polynomial form is defined by $P_{\cH}({\bf x}) = r! \sum_{\{i_1, i_2, \ldots, i_r\}\in E(\cH)}x_{i_1}\ldots x_{i_r}$, but this of course only changes the spectral radius by a constant factor). 

A \emph{Berge cycle} of length $\ell$ in an $r$-graph is a list of $\ell$ distinct edges $e_1, \ldots, e_{\ell}$ and $\ell$ distinct vertices $v_1, \ldots v_{\ell}$ such that $v_i\in e_i\cap e_{i+1}$ for $1\le i\le \ell-1$ and $v_{\ell} \in e_{\ell} \cap e_1$. A \emph{Berge path} of length $\ell$ is obtained from a Berge cycle of length $\ell$ by deleting one of the edges. A \emph{Hamiltonian Berge cycle} and a \emph{Hamiltonian Berge path} are a Berge cycle and Berge path, respectively, which span all vertices of the $r$-graph; an $r$-graph containing a Hamiltonian Berge cycle is \emph{Hamiltonian}. An $r$-graph is \emph{Hamiltonian-connected} if it contains a Hamiltonian Berge path between any pair of vertices. 
  
We now state our $r$-graph version of \thmref{fn}. 
\begin{theorem}\label{thm:specrad}
Let $r\ge 3$ be an integer and suppose $n  \ge r+2$. Let $\cH$ be an $r$-graph on $n$ vertices. If $\lambda(\cH) \ge \binom{n-2}{r-1}$, then $\cH$ contains a Hamiltonian Berge path unless $\cH\cong K_{n-1}^r+v$. If $\lambda(\cH) > \binom{n-2}{r-1}$, then $\cH$ contains a Hamiltonian Berge cycle unless $\cH \cong K_{n-1}^r + e$. 
\end{theorem}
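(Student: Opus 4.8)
The plan is to reduce the spectral statement to an extremal (edge-counting) one and then to bound the spectral radius in terms of the number of edges. First I would prove the size version announced in the abstract: if $\cH$ is an $r$-graph on $n$ vertices with $|\cH|>\binom{n-1}{r}$, then $\cH$ has a Hamiltonian Berge cycle unless $\cH\cong K_{n-1}^r+e$. Granting this, the whole theorem follows from one inequality, that few edges force small spectral radius: if $|\cH|\le\binom{n-1}{r}$ then $\lambda(\cH)\le\binom{n-2}{r-1}$, with equality only for $K_{n-1}^r$ (plus isolated vertices). Since $\lambda(K_{n-1}^r)=\binom{n-2}{r-1}$, this says precisely that $K_{n-1}^r$ is spectrally extremal among $r$-graphs of its size. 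Taking the contrapositive, $\lambda(\cH)>\binom{n-2}{r-1}$ forces $|\cH|>\binom{n-1}{r}$, and the size version then yields a Hamiltonian Berge cycle unless $\cH\cong K_{n-1}^r+e$. This mirrors exactly the way the Fiedler--Nikiforov cycle bound $\lambda(G)>n-2=\lambda(K_{n-1})$ combines the classical Ore-type edge bound with the spectral extremality of $K_{n-1}$.

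For the size version I would argue combinatorially, using that a Hamiltonian Berge cycle is exactly a cyclic ordering $v_1,\dots,v_n$ of the vertices together with $n$ distinct edges $e_1,\dots,e_n$ satisfying $\{v_i,v_{i+1}\}\subseteq e_i$; thus Hamiltonicity amounts to a Hamiltonian cycle in the shadow graph whose edges lift to distinct hyperedges, a system-of-distinct-representatives condition checkable via Hall's theorem. A vertex lying in very few edges cannot serve as an internal representative $v_i$, and this observation together with the edge bound isolates the extremal configurations $K_{n-1}^r$ and $K_{n-1}^r+e$; when all degrees are large I would run an Ore/Bondy-type rotation--extension argument on a longest Berge path, where the freedom to pick distinct representative edges makes extensions and closure into a spanning cycle easier than in the graph case.

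The heart of the argument, and the main obstacle, is the spectral inequality $|\cH|\le\binom{n-1}{r}\Rightarrow\lambda(\cH)\le\binom{n-2}{r-1}$. Let $\mathbf{x}\ge 0$ with $\lVert\mathbf{x}\rVert_r=1$ be a maximizer, so $\lambda=P_{\cH}(\mathbf{x})$. Applying AM--GM on each edge, $\prod_{i\in E}x_i\le\frac1r\sum_{i\in E}x_i^r$, and summing gives the clean bound $\lambda\le\sum_i d(i)\,x_i^r\le\Delta(\cH)$; in particular $\lambda>\binom{n-2}{r-1}$ already forces a vertex of degree exceeding $\binom{n-2}{r-1}$, which localizes the extremal structure. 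The bound by $\Delta(\cH)$ is however too weak, since a single high-degree vertex could a priori carry all of the eigenvector weight. The real difficulty is that the weights $x_i^r$ are not free: they are coupled through the eigenequations $\lambda x_i^{r-1}=\sum_{E\ni i}\prod_{j\in E\setminus i}x_j$, which forbid the weight from concentrating on a high-degree vertex unless its neighborhood also carries weight, i.e. unless there are many edges. To convert this into the sharp constant I would use a compression (shifting) argument in the spirit of Brualdi--Hoffman--Rowlinson: shifting hyperedges toward a fixed linear order does not decrease $\lambda$, and for the single edge count $m=\binom{n-1}{r}$ it suffices to show that an optimal $\cH$ may be taken to be supported on $n-1$ vertices, whereupon $\binom{n-1}{r}$ edges on $n-1$ vertices force $\cH=K_{n-1}^r$ and $\lambda\le\binom{n-2}{r-1}$. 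Proving that compression does not decrease the polynomial-form spectral radius, and that it terminates at $K_{n-1}^r$, is the technically delicate step.

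Finally I would assemble the pieces and verify the exceptions. For the cycle statement, $\lambda(\cH)>\binom{n-2}{r-1}$ gives $|\cH|>\binom{n-1}{r}$ and hence a Hamiltonian Berge cycle unless $\cH\cong K_{n-1}^r+e$; the latter is genuinely exceptional because adding an edge to $K_{n-1}^r$ strictly increases $\lambda$ above $\binom{n-2}{r-1}$, while the vertex outside the clique lies in a single edge and so cannot be an internal representative of a Berge cycle. For the path statement at the non-strict threshold $\lambda(\cH)\ge\binom{n-2}{r-1}$, the equality case of the spectral inequality pins down $\cH\cong K_{n-1}^r+v$, which has no Hamiltonian Berge path, whereas $\lambda(\cH)>\binom{n-2}{r-1}$ again yields $|\cH|>\binom{n-1}{r}$ and thus a Hamiltonian Berge cycle, a fortiori a Hamiltonian Berge path; here $K_{n-1}^r+e$ is no longer an exception, since its special vertex can be an endpoint of a spanning Berge path. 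The extremal size theorem and the checks on the exceptional graphs are comparatively routine; the crux is the spectral inequality of the third paragraph.
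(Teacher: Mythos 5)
Your top-level reduction is exactly the paper's: deduce from $\lambda(\cH)\ge\binom{n-2}{r-1}$ that $|\cH|\ge\binom{n-1}{r}$ via a Stanley-type bound relating spectral radius to edge count, then invoke the edge-count version of the theorem and sort out the exceptional configurations. The spectral inequality you isolate as the crux --- $|\cH|\le\binom{n-1}{r}$ implies $\lambda(\cH)\le\binom{n-2}{r-1}$, with equality only for $K_{n-1}^r$ plus isolated vertices --- is precisely the theorem of Bai and Lu \cite{BL} evaluated at $m=\binom{n-1}{r}$, and the compression argument you sketch is essentially how they prove it. The paper simply cites their bound $\lambda(\cH)\le p_{r-1}(p_r^{-1}(m)-1)$ and inverts it. So for that half your plan is sound in outline, but it amounts to reproving a known theorem whose proof you explicitly defer as ``the technically delicate step''; as written it is a statement of intent, not a proof.

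The more serious gap is the edge-count theorem, which you call ``comparatively routine'' but which is where essentially all of the paper's work lies. Your sketch (an SDR condition via Hall's theorem on the shadow graph, plus rotation--extension on a longest Berge path) does not engage with the actual obstruction: the relevant dichotomy is not ``degree one or Hamiltonian.'' The paper's proof uses the Dirac-type theorems of Kostochka, Luo and McCourt: if $\cH$ has no Hamiltonian Berge cycle, it has a vertex $x$ of degree at most $\binom{\lfloor(n-1)/2\rfloor}{r-1}$; if $d(x)\ge 2$ then $\cH-x$ cannot be Hamiltonian-connected and so contains a second such vertex $y$, and a convexity estimate on $|\cH-x-y|+d(x)+d_{\cH-x}(y)$ yields $|\cH|\le\binom{n-1}{r}$. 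Nothing in your sketch produces these quantitative degree thresholds, and without them the edge count cannot be closed. Moreover the counting argument degenerates at $n=r+2$: there $\binom{n-2}{r}=1$ and the bound $1+(r-1)+(r-2)=2r-2$ exceeds $\binom{r+1}{r}=r+1$ for $r\ge4$, so the paper needs a separate, fairly intricate lemma for $(n-2)$-graphs on $n$ vertices, including a computer verification for $5\le n\le 8$. Your proposal does not address the boundary case $n=r+2$ at all, even though it is exactly the threshold below which the theorem fails. Until both the spectral inequality (with its equality characterization) and the edge-count theorem are actually established, what you have is a correct strategic outline --- matching the paper's --- rather than a proof.
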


Here, $K_{n-1}^r+v$ denotes the union of an $r$-uniform, $(n-1)$-clique and an isolated vertex $v$, and $K_{n-1}^r+e$ is a $K_{n-1}^r+v$ with an additional edge incident to $v$.

To establish \thmref{specrad}, we first prove a sufficient condition for the existence of a Hamiltonian Berge path or cycle based on the number of edges $|\cH|$ in the hypergraph $\cH$.

\begin{theorem}\label{thm:edges}
Let $r\ge 3$ be an integer and suppose $n \ge r+2$. Let $\cH$ be an $r$-graph on $n$ vertices. If $|\cH| \ge \binom{n-1}{r}$, then $\cH$ contains a Hamiltonian Berge path unless $\cH\cong K_{n-1}^r+v$. If $|\cH| > \binom{n-1}{r}$, then $\cH$ contains a Hamiltonian Berge cycle unless $\cH \cong K_{n-1}^r + e$. 
\end{theorem}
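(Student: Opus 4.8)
The plan is to run a longest-Berge-path (and longest-Berge-cycle) argument, anchored by the elementary bound on how many edges can pass through a single vertex: for every $x\in V(\cH)$ we have $|\cH| = d(x) + |\cH - x|$ with $|\cH-x|\le \binom{n-1}{r}$, since $\cH-x$ is an $r$-graph on $n-1$ vertices. Hence $\delta(\cH)\ge |\cH|-\binom{n-1}{r}$, and the two-vertex version $d(x)+d(y)\ge |\cH|-\binom{n-2}{r}+d(x,y)$ (with $d(x,y)$ the codegree) gives, whenever $|\cH|>\binom{n-1}{r}$, the Ore-type inequality $d(x)+d(y)\ge \binom{n-2}{r-1}+1$ for every pair $x,y$. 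This value is exactly the degree sum attained by the shadow-nonadjacent pair (the pendant vertex and a clique vertex outside the extra edge) in $K_{n-1}^r+e$, so the theorem sits precisely at the boundary of an Ore-type condition; that is what forces an exact characterization of the extremal cases rather than a soft argument.

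First I would prove the Berge path statement. Take a longest Berge path $P = u_0 f_1 u_1 f_2 \cdots f_t u_t$ with distinct base vertices $u_0,\dots,u_t$ and distinct edges $f_1,\dots,f_t$; if $t=n-1$ we are done, so assume some vertex is unused. The heart is a rotation--extension argument adapted to the Berge setting: maximality forbids extending either endpoint by a fresh vertex along an unused hyperedge, and it forbids the P\'osa-type rotations that would create a longer path. Because every extension or rotation must use a hyperedge \emph{distinct} from all $f_i$, this distinct-edges bookkeeping must be tracked throughout, and it is the main departure from the graph case. The target conclusion is that all hyperedges meeting the endpoints are confined to $V(P)$, which with the edge lower bound forces every hyperedge to avoid some fixed vertex $v$; then $\cH-v$ has at least $\binom{n-1}{r}$ edges on $n-1$ vertices, hence equals $K_{n-1}^r$ while $v$ is isolated, i.e. $\cH\cong K_{n-1}^r+v$.

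Next I would derive the Berge cycle statement from the path analysis. If $|\cH|>\binom{n-1}{r}$ then $\delta(\cH)\ge 1$, so there is no isolated vertex and the path case already yields a Hamiltonian Berge path $u_0\cdots u_{n-1}$. To close it into a Hamiltonian Berge cycle I need a hyperedge containing both endpoints and distinct from the $n-1$ path edges; more generally, rotations close the path whenever the endpoints share an incident edge outside $P$ or their shadow-neighbourhoods interleave. The extra ``$+1$'' in the edge count (one more than the path threshold) is what should supply this closing edge in all but one configuration. The surviving obstruction is when a single vertex $v$ lies in exactly one hyperedge: then $v$ can never be an (internal) base vertex of a Berge cycle, since that requires two distinct edges through it, and a short count shows the only such $\cH$ with $|\cH|>\binom{n-1}{r}$ is $K_{n-1}^r+e$.

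It is clarifying to phrase the distinctness issue through the $2$-shadow $G=\partial_2\cH$: a Hamiltonian Berge cycle is a Hamiltonian cycle of $G$ whose $n$ consecutive pairs admit a system of distinct representatives among their covering hyperedges (Hall's theorem). I expect the genuine obstacle to be exactly this interaction between the cyclic ordering and the distinct-edges (SDR) requirement at the Ore-type boundary: one must show that the only way to block, with more than $\binom{n-1}{r}$ edges, every Hamiltonian cycle of the shadow together with every valid choice of distinct representatives is to trap one vertex inside a single edge, giving $K_{n-1}^r+e$. Verifying that no other dense configuration simultaneously evades both a Hamiltonian shadow cycle and a legal distinct-edge assignment --- equivalently, pinning down the equality case of the Ore-type bound --- is the step I anticipate needing the most care.
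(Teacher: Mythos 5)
Your proposal is a plan rather than a proof: the step that carries all of the difficulty is named but never executed. The reduction to the Ore-type inequality $d(x)+d(y)\ge\binom{n-2}{r-1}+1$ is correct arithmetic, but it does not reduce the problem, because $K_{n-1}^r+e$ itself satisfies that inequality with equality for every pair (the pendant vertex has degree $1$ and every clique vertex has degree at least $\binom{n-2}{r-1}$) while failing to be Hamiltonian. So the ``Ore-type theorem'' you would need --- degree sums at this exact threshold force a Hamiltonian Berge cycle unless $\cH\cong K_{n-1}^r+e$ --- is essentially a restatement of the theorem, and the rotation--extension argument with the system-of-distinct-representatives bookkeeping that you defer (``the step I anticipate needing the most care'') is precisely the content that is missing. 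The shadow-plus-SDR framing also gives no traction in the densest regime $n=r+2$, where $\partial_2\cH$ is complete and the entire obstruction is the distinctness of edges; this is exactly the case the paper must isolate in Lemma~\ref{lem:r+2lem}, whose proof needs a delicate path-rotation analysis for $n\ge 9$ and computer verification for $5\le n\le 8$. That a computer check is needed at the threshold is evidence that a purely soft rotation argument will not close all cases.

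For comparison, the paper takes a different and much shorter route: it invokes the Dirac-type minimum-degree theorems of Kostochka, Luo and McCourt (Theorem~\ref{thm:klm}) twice --- once to extract a low-degree vertex $x$ from a non-Hamiltonian $\cH$, and once to extract a low-degree vertex $y$ from the non-Hamiltonian-connected $\cH-x$ --- and then bounds $|\cH|=|\cH-x-y|+d_\cH(x)+d_{\cH-x}(y)\le\binom{n-1}{r}$ by convexity of binomial coefficients, with the case $d(x)=1$ giving the extremal configuration. The rotation--extension machinery you propose to build is, in effect, already packaged inside the cited theorems; if you want a self-contained argument along your lines, you would be reproving results of comparable depth. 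To make your approach work you would need to (i) actually prove the Berge analogue of the P\'osa rotation lemma with the distinct-edge constraint, (ii) characterize the equality case of your Ore bound, and (iii) give a separate treatment of $n=r+2$. The derivation of the cycle statement from $d(v)=1$ (forcing $|\cH|\le\binom{n-1}{r}+1$ with equality only for $K_{n-1}^r+e$) and your closing count are fine, but they are the easy ten percent.
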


\thmref{edges} extends a classical result that any graph $G$ with more than $\binom{n-1}{2} + 1$ edges must contain a Hamiltonian cycle. To our knowledge, \thmref{edges} has not appeared explicitly in the literature. We demonstrate that it follows from Dirac-type threshold results for $r$-graphs previously proven by Kostochka, Luo and McCourt~\cite{KLM1, KLM2}. \thmref{edges} is the key combinatorial input needed to prove \thmref{specrad}, which is deduced using an upper bound on the spectral radius of an $r$-graph proven by Bai and Lu~\cite{BL}. 

We note the condition $n\ge r+2$ in \thmref{specrad} and \thmref{edges} is best possible for $r\ge 3$. The only $r$-graph with $r+1$ vertices which contains a Hamiltonian Berge cycle is the complete graph $K_{r+1}^r$, since it is the only such $r$-graph which even has $r+1$ edges. Therefore, any noncomplete $r$-graph on $r+1$ vertices with at least three edges satisfies the conditions of \thmref{specrad} and \thmref{edges}, but does not contain a Hamiltonian Berge cycle. 

It would be interesting to find further spectral conditions which imply Hamiltonicity in $r$-graphs. Ning and Ge~\cite{NG} extended \thmref{fn} by proving a spectral sufficient condition for graphs $G$ with $\delta(G) \ge 2$ to be Hamiltonian; is there a similar extension of \thmref{specrad} for $r$-graphs? One could also attempt to find a spectral sufficient condition which guarantees pancyclicity for Berge cycles in $r$-graphs. Spectral sufficient conditions for pancyclicity in graphs have been previously studied~\cite{XYAAYC}. Moreover, Dirac-type minimum degree conditions for Berge pancyclicity have been studied in~\cite{bpan1,bpan2}.

\section{Proof of \thmref{edges}}

In this section, we prove \thmref{edges}. We first handle the special case $n=r+2$. 

\begin{lemma}\label{lem:r+2lem}
Let $n \geq 5$. If $\cH$ is an $n$-vertex, $(n-2)$-graph with $|\cH| \geq n$, then $\cH$ is Hamiltonian unless $\cH \cong K_{n-1}^{n-2} + e$. 
\end{lemma}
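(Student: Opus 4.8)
The plan is to exploit the duality that is special to the $(n-2)$-uniform setting: an edge of $\cH$ is an $(n-2)$-subset of $V$, hence is the complement of a unique pair, so $\cH$ corresponds bijectively to a graph $G$ on the same vertex set with $|E(G)| = |\cH| \ge n$, where $\{x,y\}\in E(G)$ iff $V\setminus\{x,y\}\in \cH$. Under this dictionary, $d_{\cH}(x) = |\cH| - d_G(x)$, and a Hamiltonian Berge cycle of $\cH$ (core vertices $v_1,\dots,v_n$ in cyclic order, distinct edges $e_1,\dots,e_n$ with $v_{i-1},v_i\in e_i$) translates into a cyclic ordering $v_1,\dots,v_n$ of $V$ together with distinct edges $f_1,\dots,f_n\in E(G)$ satisfying
\[
f_i \cap \{v_{i-1},v_i\} = \emptyset \qquad (1\le i\le n,\ \text{indices mod } n).
\]
One checks that $K_{n-1}^{n-2}+e$ dualizes to a star plus one extra edge, whose center has $\cH$-degree $1$; this is the configuration our argument must single out.

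I would split on $\delta(\cH)$. First I observe the trivial necessary condition $\delta(\cH)\ge 2$: in any Berge cycle each vertex is a core vertex $v_i$, hence lies in the two distinct edges $e_i,e_{i+1}$. So suppose $\delta(\cH)\le 1$ and let $x$ have $d_{\cH}(x)\le 1$. If $d_{\cH}(x)=0$, every edge is an $(n-2)$-subset of $V\setminus\{x\}$, of which there are only $\binom{n-1}{n-2}=n-1$, contradicting $|\cH|\ge n$. If $d_{\cH}(x)=1$, the remaining $\ge n-1$ edges all avoid $x$, but again only $n-1$ such edges exist; hence $|\cH|=n$, all $n-1$ edges of $K_{n-1}^{n-2}$ on $V\setminus\{x\}$ are present, and there is exactly one further edge through $x$. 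This is precisely $\cH\cong K_{n-1}^{n-2}+e$, and $\cH$ is non-Hamiltonian. This disposes of the exceptional case cleanly.

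The substance is the case $\delta(\cH)\ge 2$, equivalently $\Delta(G)\le |E(G)|-2$, where I must produce the Berge cycle. My approach is to fix a cyclic ordering of $V$ and invoke Hall's theorem on the bipartite graph whose parts are the $n$ \emph{gaps} $\{v_{i-1},v_i\}$ and the edge set $E(G)$, joining a gap to every $G$-edge disjoint from it; an SDR saturating the gaps is exactly the data above. The favorable structural fact is that a fixed edge $f=\{x,y\}$ lies in at most $4$ gaps (each of $x,y$ lies in two consecutive gaps), so for $n\ge 5$ every edge is available to at least $n-4\ge 1$ gaps, and symmetrically a gap $\{a,b\}$ is joined to $|E(G)|-d_G(a)-d_G(b)+[\,\{a,b\}\in E(G)\,]$ edges. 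Thus the only dangerous Hall sets are small collections of gaps concentrated at high-$G$-degree vertices, and I would choose the cyclic ordering so that the two cycle-neighbors of each (near-)maximum-degree vertex of $G$ are non-adjacent in $G$.

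The main obstacle is the tight sub-case $\delta(\cH)=2$, i.e.\ a vertex $a$ with $d_G(a)=|E(G)|-2$: then exactly two $G$-edges avoid $a$, they are forced to fill the two gaps at $a$, and the Hall inequality for this pair of gaps fails precisely when the two cycle-neighbors of $a$ are joined in $G$. The plan is to show that the ordering can always be chosen to keep such neighbors non-adjacent while simultaneously satisfying the remaining Hall inequalities, the latter following from the bound $d_G(\cdot)\le |E(G)|-2$ together with the "at most $4$ gaps per edge" estimate; and to argue that any genuine failure of these choices would force a vertex of $\cH$-degree at most $1$, i.e.\ return us to the exceptional graph already handled. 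Verifying that this ordering exists for every $G$ with $|E(G)|\ge n$ and $\Delta(G)\le |E(G)|-2$, and checking the full family of Hall inequalities in the tight regime, is the delicate part I expect to require the most care.
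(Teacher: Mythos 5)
Your complement dictionary is set up correctly: identifying each $(n-2)$-edge with the pair it misses turns $\cH$ into a graph $G$ with $|E(G)|=|\cH|$, and a Hamiltonian Berge cycle becomes exactly a cyclic ordering of $V$ together with a system of distinct representatives assigning to each gap $\{v_{i-1},v_i\}$ a $G$-edge disjoint from it. Your treatment of $\delta(\cH)\le 1$ is also complete and correct, and in fact cleaner than the paper's: a degree-$0$ vertex forces $|\cH|\le n-1$, and a degree-$1$ vertex forces $|\cH|=n$ with all $n-1$ edges on the remaining vertices present, which is precisely $K_{n-1}^{n-2}+e$.

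However, the case $\delta(\cH)\ge 2$ --- which is the entire content of the lemma --- is only a plan, not a proof. You never verify Hall's condition, and you never prove that a cyclic ordering with the required properties exists; you explicitly defer both as ``the delicate part.'' This is a genuine gap, and not a routine one. Concretely: a single gap $\{a,b\}$ can already violate Hall (it sees $m-d_G(a)-d_G(b)+[\,ab\in E(G)\,]$ edges, which can be $0$ even with $\Delta(G)\le m-2$, e.g.\ when $d_G(a)+d_G(b)=m$ and $ab\notin E(G)$); a vertex $a$ with $d_G(a)=m-2$ forces its two gaps onto the only two edges missing $a$, a $2\times 2$ system whose solvability constrains the choice of both cyclic neighbors of $a$; and several near-maximum-degree vertices of $G$ impose such constraints simultaneously, so one must show they can all be met by one ordering. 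Your quantitative cushion (``each edge meets at most $4$ gaps, so is available to $n-4\ge 1$ gaps'') is far too weak to settle this, especially for $n\in\{5,6,7,8\}$ --- tellingly, the paper itself resorts to a computer search in exactly that range and, for $n\ge 9$, proves the lemma by an entirely different route (an edge-maximal counterexample together with a rotation argument on a Hamiltonian Berge path, using the fact that every pair of vertices spans at least $n-1$ edges). Your reduction is an attractive alternative framing, but as it stands the lemma's main assertion remains unproved.
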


\begin{proof}
For $5\le n\le 8$, we check by computer that every $n$-vertex, $(n-2)$-graph with $n$ edges is Hamiltonian except for $K_{n-1}^{n-2} + e$ and every $n$-vertex, $(n-2)$-graph with $n+1$ edges is Hamiltonian. We do this by modeling\footnote{The code is available from the authors upon request.}  the problem of finding a Hamiltonian Berge cycle in a $r$-graph as a \emph{constraint satisfaction problem} (CSP), which is done by a straightforward adaptation of a CSP used for finding a Hamiltonian cycle in a graph. We use the CSP solver OR-Tools CP-SAT \cite{cpsat} to solve the CSP for each $n$-vertex, $(n-2)$-graph with $n$ or $n+1$ edges and $5\le n\le 8$. So assume that $n\ge 9$. 

If there exists $v \in \cH$ with $d(v) = 1$, then $\cH-v$ has at most ${n-1 \choose n-2}$ edges. It follows immediately that $|\cH| \leq {n-1 \choose n-2} + 1$ with equality only if $\cH \cong K_{n-1}^{n-2} + e$. Now suppose $\delta(\cH) \geq 2$.
If there exists a pair of vertices $u$ and $v$ that together span at most $n-2$ edges, then $\cH - u- v$ contains at most one edge, and hence $|\cH| < n$, a contradiction.

Consider an edge-maximal counterexample $\cH$. Then $\cH$ has at least $n$ edges and is not Hamiltonian, but the addition of any new edge creates a Hamiltonian Berge cycle. In particular, $\cH$ must have a Hamiltonian Berge path. Since each such path $P$ uses $n-1 < |\cH|$ edges, there exists an edge $e^{(P)}$ that is not used in $P$. 

We first show that among all such pairs $(P, e^{(P)})$, say $P = v_1, e_1, \ldots, e_{n-1}, v_n$, we may choose one with $v_1 \in e^{(P)}$. Suppose that $v_1 \not \in e^{(P)}$. By symmetry, every edge $e \notin E(P)$ contains neither $v_1$ nor $v_n$. Thus $e^{(P)}$ is the only edge outside of $P$ and $e^{(P)} = \{v_2, \ldots, v_{n-1}\}$. Without loss of generality, let $d(v_1) \geq d(v_n)$. Since every pair of vertices span at least $n-1$ edges, we have $d(v_1) + d(v_n) \geq n-1$ and so $d(v_1) \geq 3$. There exists some $e_i$, $i \notin \{1,n-1\}$ containing $v_1$. Then the pair $(P', e_i)$ with 
  \[P' = v_1, e_1, \ldots, v_{i}, e^{(P)}, v_{i+1}, e_{i+1}, \ldots, v_n,\] is chosen so that $v_1 \in e^{(P')}$.
  
Let $(P,e^{(P)})$ be such that $P$ is a Hamiltonian Berge path with $v_1 \in e^{(P)}$. If $v_n \in e^{(P)}$ as well then $\cH$ has a Hamiltonian Berge cycle. Suppose $v_n \not \in e^{(P)}$.

	Let $J = \{j: v_n \in e_j\}$. If $J = \{ n-1\}$, then there exists an edge $f^{(P)}$ with $v_n \in f^{(P)}$  which is not used in $P$ since $d(v_n)\geq 2$. Clearly $|e^{(P)} \cap f^{(P)}|\geq n-4$ and  $e^{(P)}\neq f^{(P)}$ since $v_n \notin e^{(P)}$ . Since $\cH$ is $(n-2)$-uniform with $n\geq 9$, there exists a pair of vertices $\{v_i,v_{i+1}\} \subset e^{(P)} \cap f^{(P)}$ and $\cH$ has a Hamiltonian Berge cycle
\[
v_1, \ldots, v_i, f^{(P)}, v_n, e_{n-1}, v_{n-1}, \ldots, v_{i+1}, e^{(P)}, v_1.
\]
  
Hence, we may assume $|J| > 1 $. If there exists $j \in J$ such that $v_{j+1} \in e^{(P)}$ then $\cH$ has a Hamiltonian Berge cycle
 \[v_1, \ldots, v_j, e_j, v_n, e_{n-1}, v_{n-1}, \ldots, v_{j+1}, e^{(P)}, v_1.\]It follows that
 \begin{equation}\label{disj}
\{v_{j+1}: j \in J\} \cap e^{(P)} = \emptyset. 
 \end{equation}

Let $e_i$ be an edge containing $v_n$ with $i \neq n-1$.  
Define $J' = \{j: \{v_j, v_{j+1}\} \subset e^{(P)}\}$. Since $v_n \notin e^{(P)}$, $|J'| \geq |e^{(P)}|-1 - 1 = n-4$. By~\eqref{disj}, for all $j \in J'$, $v_n \notin e_j$, as otherwise $j\in J$ and $v_{j+1} \in e^{(P)}$. Moreover, at most $2$ edges $e_j$ with $j \in J'$ do not contain $v_i$ or $v_{i+1}$ since $\cH$ is $(n-2)$-uniform. Thus, there exists at least $|J'| - 2 \geq n-6\geq 3$ edges $e_j$ with $j \in J'$ and $\{v_i,v_{i+1}\} \in e_j$. 
Let $J'' \subseteq J$ be all such $j$. 
 
Since $|e_i| = n-2$ and $|J''| \geq 3$, there exists $k \in J''$ such that $v_{k} \in e_i$. If $i < k$, then $\cH$ has a Hamiltonian Berge cycle
\[v_1, \ldots, v_i, e_k, v_{i+1}, e_{i+1}, \ldots, v_k, e_i, v_n, e_{n-1}, \ldots, v_{k+1}, e^{(P)}, v_1.\]
If $i > k$, then we instead get a Hamiltonian Berge cycle
\[v_1, \ldots, v_k, e_i, v_n, e_{n-1}, \ldots, v_{i+1}, e_k, v_i, \ldots, v_{k+1}, e^{(P)}, v_1.\]
\end{proof}

We will use the following Dirac-type results of Kostochka, Luo and McCourt~\cite[Theorem 1.3 and 1.4]{KLM2}. 

\begin{theorem}[Kostochka-Luo-McCourt]\label{thm:klm}
Let $n > r\ge 3$. Suppose $\cH$ is an $n$-vertex, $r$-graph. 
\begin{enumerate}
\item[(1)] If (i) $r\le (n-1)/2$ and $\delta(\cH)\ge \binom{\lfloor{(n-1)/2\rfloor}}{r-1} + 1$ or (ii) $n-1 \geq r\ge n/2$ and $\delta(\cH) \ge r$, then $\cH$ contains a Hamiltonian Berge cycle. 
\item[(2)] If (i) $r\le n/2$ and $\delta (\cH) \ge \binom{\lfloor{n/2\rfloor}}{r-1} + 1$ or (ii) $n-1\ge r > n/2 \ge 3$ and $\delta(\cH) \ge r-1$, then $\cH$ is Hamiltonian-connected. 
\end{enumerate}
\end{theorem}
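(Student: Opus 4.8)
The plan is to first recast Berge Hamiltonicity in a form amenable to a P\'osa-type rotation argument. Introduce the \emph{$2$-shadow} $\partial\cH$, the graph on $V(\cH)$ in which $uv$ is an edge precisely when some $e\in\cE$ contains both $u$ and $v$. A Hamiltonian Berge cycle is then exactly a Hamilton cycle $v_1v_2\cdots v_nv_1$ of $\partial\cH$ together with an injection $i\mapsto e_i\in\cE$ satisfying $\{v_{i-1},v_i\}\subseteq e_i$ for all $i$ (indices mod $n$); that is, a system of distinct representatives for the $n$ consecutive pairs. By Hall's theorem, once a Hamilton cycle $C$ of $\partial\cH$ is fixed, these distinct edges exist iff every $t$ of the pairs of $C$ are together contained in at least $t$ hyperedges. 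This reformulation also pins down why the thresholds are sharp: taking $S$ with $|S|=\lfloor(n-1)/2\rfloor$, $I=V\setminus S$, and $\cE$ equal to all $r$-sets with at least $r-1$ vertices in $S$ makes $I$ an independent set of $\partial\cH$ of size exceeding $n/2$, so $\partial\cH$ has no Hamilton cycle at all; here $\delta(\cH)=\binom{\lfloor(n-1)/2\rfloor}{r-1}$, attained on $I$, which is exactly one below the hypothesis of (1)(i). The analogous construction with $|S|=\lfloor n/2\rfloor$ defeats Hamiltonian-connectivity and explains the threshold in (2)(i). A key warning this exposes is that large hypergraph degree need not force large shadow degree (all edges through a vertex may crowd into a small link), so one cannot simply apply a graph Dirac theorem to $\partial\cH$: the Hall/SDR slack must be used simultaneously.

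The engine for the positive direction is a rotation--extension argument on a longest Berge path $P=x_1e_1x_2\cdots e_{\ell-1}x_\ell$, adapted so that the edge-disjointness (SDR) constraint is respected throughout. The crucial observation is that a rotation of $P$ fixing the end $x_1$ and pivoting on an \emph{unused} hyperedge $f\ni\{x_i,x_\ell\}$ (that is, $f\notin E(P)$) produces a new longest Berge path of the same length with new endpoint $x_{i+1}$, while \emph{freeing} exactly the single edge $e_i$; thus each rotation preserves both a valid Berge path and a running supply of available edges. Let $R$ be the set of endpoints of all Berge paths obtained from $P$ by sequences of such rotations while fixing $x_1$. By maximality of $P$, every available edge through an endpoint of $R$ lies inside $V(P)$, so its $r$ vertices furnish pivots, and the hypothesis $\delta(\cH)\ge\binom{\lfloor(n-1)/2\rfloor}{r-1}+1$ forces each endpoint to have many available incident edges, and hence $R$ to be large.

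I would then prove a P\'osa-type expansion statement: if $\cH$ has no Hamiltonian Berge cycle, the neighborhoods (through available edges) of $R$ cannot expand, which traps all the $\ge\delta$ edges incident to endpoints inside a vertex set of size at most $\lfloor(n-1)/2\rfloor$, forcing $\delta(\cH)\le\binom{\lfloor(n-1)/2\rfloor}{r-1}$ and contradicting the hypothesis. The same machinery first yields a spanning Berge path, and then a closing step---finding, between some endpoint reachable in $R$ and $x_1$, an unused edge---produces the Hamiltonian Berge cycle. Any leftover vertices are absorbed by an insertion step: a vertex $y\notin V(C)$ of degree $\ge\delta$ can be spliced into $C$ using two distinct unused edges joining $y$ to two consecutive vertices of $C$.

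The large-uniformity regimes (1)(ii) and (2)(ii), where $r\ge n/2$, are handled separately and more directly: since edges have size greater than $n/2$ the shadow $\partial\cH$ is essentially complete and any two edges overlap substantially, so one can greedily realize the SDR along an almost arbitrary Hamilton cycle once $\delta\ge r$ (resp.\ $r-1$) guarantees enough distinct edges at each vertex; the smallest cases (e.g.\ $n=r+2$) are exactly those settled by direct/computer-assisted checking, in the spirit of Lemma~\ref{lem:r+2lem}. For Hamiltonian-connectivity (part (2)) the rotation argument is run with both endpoints prescribed; fixing two endpoints removes one degree of freedom and is what pushes the threshold from $\lfloor(n-1)/2\rfloor$ up to $\lfloor n/2\rfloor$. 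I expect the main obstacle to be precisely the coupling between the two constraints in the reformulation---maintaining enough \emph{distinct} available edges (the SDR/Hall side) while simultaneously rotating to expand the endpoint set (the connectivity side)---together with the tight equality analysis near $\delta=\binom{\lfloor(n-1)/2\rfloor}{r-1}$, where one must rule out the extremal configuration above rather than merely count.
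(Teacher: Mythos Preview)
The paper does not prove this theorem: it is quoted as \cite[Theorems~1.3 and~1.4]{KLM2} and invoked as a black box in the proof of Theorem~\ref{thm:edges}. There is no argument in the paper for you to compare against.

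Your outline is nonetheless a reasonable sketch of how Dirac-type results for Berge cycles are actually established---the Kostochka--Luo--McCourt papers do proceed via a rotation--extension scheme on longest Berge paths. The points you flag as delicate (maintaining a supply of \emph{unused} hyperedges through each endpoint while rotating, and the tight equality analysis at the threshold) are exactly where the real proofs spend their effort. Your sketch does not yet contain the quantitative lemma that converts the minimum-degree hypothesis into a lower bound on the number of pivot vertices reachable through unused edges (the hypothesis bounds total degree, not available degree, and a rotation frees only one edge), nor a justification that the absorption step can always find two distinct available edges from a leftover vertex to consecutive cycle vertices. These are substantive lemmas in the original papers, not routine bookkeeping, so the proposal should be read as a plausible plan rather than a proof.
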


We use \thmref{klm} to prove \thmref{edges} for $n\ge r+3$.  

\begin{proof}[Proof of \thmref{edges}]
 We first assume $n\ge 2r+1$. Let $\cH$ be an $n$-vertex $r$-graph and assume that $\cH$ does not contain a Hamiltonian Berge cycle. Then, by \thmref{klm}(1)(i), there is a vertex $x$ with degree at most $\binom{\lfloor{(n-1)/2\rfloor}}{r-1}$. If $d(x) = 1$, then it follows immediately that $|\cH|\le \binom{n-1}{r} + 1$ and $\cH\cong K_{n-1}^r + e$ if $|\cH| = \binom{n-1}{r} + 1$. Otherwise, if $d(x) \ge 2$, then there exist distinct vertices $u, v \in V - \{x\}$ such that there are distinct edges containing the $2$-sets $\{x, u\}$ and $\{x, v\}$ respectively. If $\cH-x$ is Hamiltonian-connected, then there is a Hamiltonian Berge path from $u$ to $v$, and we can connect $x$ to this path to obtain a Hamiltonian Berge cycle. So $\cH-x$ cannot be Hamiltonian-connected. Hence, by \thmref{klm}(2)(i), there is a vertex $y$ in $\cH-x$ with degree at most $\binom{\lfloor{(n-1)/2\rfloor}}{r-1}$ in $\cH - x$. Therefore, using the convexity of binomial coefficients, we have
\begin{align*}
	|\cH|&= |\cH-x-y| + d_{\cH}(x) + d_{\cH-x}(y)\\
	&\leq \binom{n-2}{r} + \binom{\lfloor{(n-1)/2\rfloor}}{r-1} + \binom{\lfloor{(n-1)/2\rfloor}}{r-1}\\
	&\leq \binom{n-2}{r} + \binom{\lfloor{(n-1)/2\rfloor}+\lfloor{(n-2)/2\rfloor}}{r-1}+\binom{\lfloor{(n-1)/2\rfloor}-\lfloor{(n-2)/2\rfloor}}{r-1}\\
	&= \binom{n-2}{r} + \binom{n-2}{r-1}\\
	&= \binom{n-1}{r}.
\end{align*}
If $r+3\leq n\leq2r$, then by using the same argument with the minimum degree bounds given by \thmref{klm}(1)(ii) and \thmref{klm}(2)(ii), we similarly obtain 
\begin{align*}
	|\cH|&= |\cH-x-y| + d_{\cH}(x) + d_{\cH-x}(y)\\
	&\leq \binom{n-2}{r} + (r-1) + (r-2) \leq \binom{n-1}{r}.
\end{align*}
The remaining case $n=r+2$ follows from Lemma~\ref{lem:r+2lem}. 

We have proved that if $|\cH| \ge \binom{n-1}{r} + 1$, then $\cH$ contains a Hamiltonian Berge cycle unless $\cH \cong K_{n-1}^r + e$. The hypergraph $K_{n-1}^r + e$ has a Hamiltonian Berge path with the degree one vertex as one of its endpoints. Assume $|\cH| = \binom{n-1}{r}$. If $\cH \cong K_{n-1}^r + v$, then $\cH$ does not have a Hamiltonian Berge path as it has an isolated vertex. Otherwise, there exists some $e' \notin \cE(\cH)$ such that $\cH + e' \not\cong K_{n-1}^r + e$, so that in particular $\cH+e'$ contains a Hamiltonian Berge cycle. It follows that $\cH$ contains a Hamiltonian Berge path. 
\end{proof}

\section{Proof of \thmref{specrad}}

The following result of Bai and Lu~\cite{BL} bounds the spectral radius of a hypergraph in terms of the number of edges. For a fixed integer $r$, define the polynomial $p_r(x) = \binom{x}{r} = \frac{x(x-1)\cdots (x-r+1)}{r!}$. The polynomial $p_r$ is an increasing function on the interval $[r-1, \infty)$; furthermore, its inverse function $p_r^{-1}:[0, \infty) \rightarrow [r-1, \infty)$ exists and is also increasing. Note in particular that $p_r^{-1}(\binom{a}{r}) = a$ for every integer $a\ge r$. Following Bai and Lu, define the function  \[f_r(x):= p_{r-1}(p_r^{-1}(x) - 1).\]

\begin{theorem}[Bai-Lu]\label{thm:bailu}
For fixed $r\ge 2$, let $\cH$ be an $r$-graph with $m$ edges. Then, 
\[\lambda(\cH) \le f_r(m),\]
and equality holds if and only if $m = \binom{k}{r}$ and $\cH$ is the union of the complete graph $K_{k}^r$ and possibly some isolated vertices. 
\end{theorem}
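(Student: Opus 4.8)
The plan is to attack Theorem~\ref{thm:bailu} variationally: first fix the hypergraph and understand the optimal vector, then optimize over all $r$-graphs with a prescribed number of edges. By compactness the maximum defining $\lambda(\cH)$ is attained at some unit vector, and since replacing each coordinate by its absolute value cannot decrease $P_{\cH}$, I may take the maximizer $\mathbf{x}$ to be nonnegative; restricting to a connected component and invoking Perron--Frobenius theory for the associated symmetric tensor, I may further assume $\mathbf{x}$ is positive on the support and satisfies the eigenequations $\lambda x_i^{r-1}=\sum_{e\ni i}\prod_{j\in e\setminus i}x_j$, with $\lambda=P_{\cH}(\mathbf{x})=r\sum_{e}\prod_{t\in e}x_t$ (the Lagrange multiplier coinciding with $\lambda$ by Euler's identity for the degree-$r$ form $P_{\cH}$). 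It is convenient to recast the target: writing $\lambda=\binom{s-1}{r-1}=p_{r-1}(s-1)$ for the real number $s=p_{r-1}^{-1}(\lambda)+1$, the monotonicity of $p_r,p_{r-1}$ recorded just before Theorem~\ref{thm:bailu} makes $\lambda\le f_r(m)$ equivalent to $p_r(s)=\binom{s}{r}\le m$. In this form the statement reads: an $r$-graph whose spectral radius equals that of a (fractional) clique on $s$ vertices must have at least as many edges as that clique, with the clique as the unique equality case.

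The engine of the argument is a compression/exchange step. Fix $m$ and let $\cH$ maximize $\lambda$ among all $r$-graphs with $m$ edges (a maximum exists, as isolated vertices are irrelevant and otherwise the vertex set is bounded). Let $\mathbf{x}$ be its nonnegative Perron vector, ordered so that $x_1\ge x_2\ge\cdots$. I claim $\cH$ may be taken \emph{left-shifted}: whenever $e\in E(\cH)$, $j\in e$, $i<j$, and $(e\setminus\{j\})\cup\{i\}\notin E(\cH)$, replacing $e$ by $(e\setminus\{j\})\cup\{i\}$ cannot decrease $r\sum_{e}\prod_{t\in e}x_t$, because $x_i\ge x_j$ forces $\prod_{t\in(e\setminus\{j\})\cup\{i\}}x_t\ge\prod_{t\in e}x_t$. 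Iterating such swaps and re-optimizing $\mathbf{x}$ terminates at a shifted $r$-graph with $m$ edges whose spectral radius is at least $\lambda(\cH)$; equivalently, the extremal $\cH$ consists, with respect to its own Perron vector, of the $m$ $r$-sets of largest product, and so is concentrated on the highest-weight vertices. Recording exactly when these swaps are strict will later pin down the equality case.

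It then remains to analyze shifted extremal hypergraphs. When $m=\binom{k}{r}$, I would argue that the shifted $r$-graph of this size maximizing $\lambda$ is precisely $K_k^r$ plus isolated vertices: any shifted family of size $\binom{k}{r}$ other than this clique omits some low-index $r$-set while including a higher-index one, and the exchange step then strictly increases $\lambda$. For the clique the uniform vector on its $k$ vertices gives $\lambda(K_k^r)=\binom{k-1}{r-1}=f_r\!\left(\binom{k}{r}\right)$, so equality holds. For intermediate $\binom{k}{r}<m<\binom{k+1}{r}$ the task is the \emph{strict} inequality $\lambda(\cH)<f_r(m)$, and this is where I expect the real difficulty to lie: the extremal graph is now $K_k^r$ augmented by a partial ``link'' of edges through the next vertex, and one must show its Perron value stays below the fractional-clique value $f_r(m)$. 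My plan here is to compare $\cH$ with the fractional clique on $t=p_r^{-1}(m)$ vertices through a convexity/monotonicity property of $f_r$ --- concretely, that the spectral radius of a clique-plus-partial-link is dominated by interpolating between $K_k^r$ and $K_{k+1}^r$ --- and to read off the equality characterization by combining this with the strictness in the compression step. The monotone behavior of $p_r$, $p_{r-1}$, and their inverses is exactly what powers these final comparisons.
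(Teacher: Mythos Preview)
The paper does not prove Theorem~\ref{thm:bailu}: it is quoted from Bai and Lu~\cite{BL} and invoked as a black box in the derivation of Theorem~\ref{thm:specrad}. There is therefore no proof in the paper to compare your proposal against.

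As for the proposal itself, the reduction to left-shifted hypergraphs via the exchange step is sound and standard, and the identification of the clique case $m=\binom{k}{r}$ is fine. The genuine gap is exactly where you flag it: for $\binom{k}{r}<m<\binom{k+1}{r}$ you do not actually prove anything, only announce a plan to compare the clique-plus-partial-link configuration against the fractional clique value $f_r(m)$ via some unspecified convexity or interpolation property. That step is the heart of the theorem---it is what distinguishes the hypergraph result from Stanley's elementary graph bound---and your sketch does not supply the mechanism. In Bai and Lu's original proof this is handled by a careful analysis of the eigenequations on the extremal shifted family, not by a soft convexity appeal; absent such an argument, the proposal is a plausible outline rather than a proof.
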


This is a generalization of the classic bound of Stanley~\cite{Stan}, which states that for a graph with $m$ edges, the spectral radius is bounded above by $\frac{\sqrt{1+8m} -1}{2}$. 

We proceed with the proof of \thmref{specrad}. 

\begin{proof}[Proof of \thmref{specrad}]
	Let $\cH$ be an $n$-vertex $r$-graph with $\lambda(\cH) \ge \binom{n-2}{r-1}$ and $m$ edges. From Bai and Lu's inequality in \thmref{bailu}, we have
	\begin{alignat*}{4}
			&& \binom{n-2}{r-1} \leq \lambda(\cH)  &\leq  p_{r-1}(p_r^{-1}(m) - 1), \\
		\Rightarrow\quad&& p^{-1}_{r-1} \left(\binom{n-2}{r-1} \right)  &\leq p^{-1}_r (m) - 1,\\
		\Rightarrow\quad&& n-2 +1 &\leq   p^{-1}_r (m),\\
		\Rightarrow\quad&& {n-1 \choose r} &\leq  m.
	\end{alignat*}
	Hence, by \thmref{edges}, $\cH$ contains a Hamiltonian Berge path unless $\cH \cong K_{n-1}^r + v$. If the inequality is strict in our assumption, it follows from \thmref{edges} that $\cH$ contains a Hamiltonian Berge cycle unless $\cH \cong K_{n-1}^{r} + e$. 
\end{proof}


\begin{thebibliography}{99}

\bibitem{BL}
S.~Bai and L.~Lu, A bound on the spectral radius of hypergraphs with $e$ edges, \emph{Linear Algebra Appl.}, {\bf 549}, (2018), 203--218. 


\bibitem{bpan1}
T.~Bailey, Y.~Li, and R.~Luo, Berge Pancyclic hypergraphs, available at arXiv:2410.21733.

\bibitem{bpan2}
T.~Bailey, I.~Hollars, Y.~Li, and R.~Luo, Pancyclicity in hypergraphs with large uniformity, available at arXiv:2505.00130.

\bibitem{CD}
J.~Cooper and A.~Dutle, Spectra of uniform hypergraphs, \emph{Linear Algebra Appl.}, {\bf 436} (2012), 3268--3292. 

\bibitem{Dir}
G.A.~Dirac, Some theorems on abstract graphs, \emph{Proc. London Math. Soc.} {\bf 2}(3) (1952), 69--81.

\bibitem{FN}
M.~Fiedler and V.~Nikiforov, Spectral radius and Hamiltonicity of graphs, \emph{Linear Algebra Appl.}, {\bf 432} (2010), no. 9, 2170--2173. 

\bibitem{KLM1}
A.~Kostochka, R.~Luo, and G.~McCourt, Dirac's Theorem for hamiltonian Berge cycles in uniform hypergraphs, available at arXiv:2109.12637. 

\bibitem{KLM2}
A.~Kostochka, R.~Luo and G.~McCourt, Minimum degree ensuring that a hypergraph is Hamiltonian-connected, \emph{European J. Combin.} {\bf 114} (2023), Paper No. 103782, 18 pp. 

\bibitem{LLT}
M.~Lu, H.~Liu and F.~Tian, Spectral radius and Hamiltonian graphs, \emph{Linear Algebra Appl.}, {\bf 437} (2012), 1670--1674. 

\bibitem{NikAnalytic}
V.~Nikiforov, Analytic methods for uniform hypergraphs, \emph{Linear Algebra Appl.}, {\bf 457} (2014), 455--535. 

\bibitem{Nik}
V.~Nikiforov, Spectral radius and Hamiltonicity of graphs with large minimum degree, \emph{Czechoslovak Math. Journal}, {\bf 66 (141)}, (2016), 925--940. 

\bibitem{NG}
B.~Ning and J.~Ge, Spectral radius and Hamiltonian properties of graphs, \emph{Linear Multilinear Algebra}, {\bf 63} (2015), 1520--1530. 

\bibitem{cpsat}
L.~Perron and F.~Didier, CP-SAT, v.9.12, Google, \url{https://developers.google.com/optimization/cp/cp_solver/}. 

\bibitem{Stan}
R.P.~Stanley, A bound on the spectral radius of graphs with $e$ edges, \emph{Linear Algebra Appl.}, {\bf 67} (1987), 267--269. 

\bibitem{XYAAYC}
H.~Xu, T.~Yu, F.~E.~Alsaadi, M.~O.~Alassafi, G.~Yu, J.~Cao, Some spectral conditions for a graph being pancyclic, \emph{AIMS Math.} {\bf 5}, No. 6 (2020), 5389--5401. 


\end{thebibliography}
\end{document}